      \theoremstyle{plain}
      \newtheorem{theorem}{Theorem}[section]
      \newtheorem{lemma}[theorem]{Lemma}
      \theoremstyle{definition}
      \newtheorem{definition}[theorem]{Definition}
      \theoremstyle{remark}
\title{Disjoint edges in complete topological graphs}
\author{Andrew Suk\thanks{Massachusetts Institute of Technology, Cambridge. Email: {\tt asuk@math.mit.edu}.  Research funded by an NSF Postdoctoral Fellowship, and by Swiss National Science Foundation Grant 200021-125287/1. }}
\begin{document}

\maketitle

\begin{abstract}
It is shown that every complete $n$-vertex simple topological graph has at $\Omega(n^{1/3})$ pairwise disjoint edges, and these edges can be found in polynomial time.  This proves a conjecture of Pach and T\'oth, which appears as problem 5 from chapter 9.5 in \emph{Research Problems in Discrete Geometry} by Brass, Moser, and Pach.
\end{abstract}

\section{Introduction}

Given a collection of objects $\mathcal{C}$ in the plane, the \emph{intersection graph} $G(\mathcal{C})$ has vertex set $\mathcal{C}$ and two objects are adjacent if and only if they have a nonempty intersection.  The \emph{independence number} of $G(\mathcal{C})$ is the size of the largest independent set, that is, the size of the largest subfamily of pairwise disjoint objects in $\mathcal{C}$.  It is known that computing the independence number of an intersection graph is NP-hard \cite{garey}, even for very simple objects such as rectangles and disks in the plane \cite{imai, fowler}.  However, due to its applications in VLSI design \cite{vlsi}, map labeling \cite{map}, and elsewhere, a lot of research has been devoted to developing polynomial-time approximation schemes (PTAS) for computing the independence number of intersection graphs (see \cite{foxpach} for more references).  In this paper, we study the independence number of the intersection graph of edges in a complete simple topological graph.

A \emph{topological graph} is a graph drawn in the plane such that its vertices are represented by points
 and its edges are represented by non-self-intersecting arcs connecting the corresponding points. The arcs are
allowed to intersect, but they may not intersect vertices except for
their endpoints.  Furthermore, no three edges have a common interior point and no two edges are tangent, i.e., if two edges share an interior point, then they must properly cross at that point in common.  A topological graph is \emph{simple} if every pair of its edges intersect at most once.  Two edges of a topological graph \emph{cross} if their interiors share a point, and are \emph{disjoint} if they neither share a common vertex nor cross.  If the edges are drawn as straight-line segments, then the graph is \emph{geometric}.

Let $F$ denote the graph obtained from the complete graph on 5 vertices by subdividing each edge with an extra vertex.  It is easy to see that every intersection graph $G$ of curves in the plane does not contain $F$ as an induced subgraph \cite{tarjan}.  By applying a theorem of Erd\H{o}s and Hajnal \cite{hajnal}, every complete $n$-vertex simple topological graph contains $e^{\Omega(\sqrt{\log n})}$ edges that are either pairwise disjoint or pairwise crossing.  However, it was suspected \cite{solymosi} that this bound is far from optimal.  Fox and Pach \cite{pach} showed that there exists a constant $\delta> 0$, such that every complete $n$-vertex simple topological graph contains $\Omega(n^{\delta})$ pairwise crossing edges.  However, much weaker bounds were previously known for pairwise disjoint edges.

In 2003, Pach, Solymosi, and T\'oth \cite{solymosi} showed that every complete $n$-vertex simple topological graph has at least $\Omega(\log^{1/6}n)$ pairwise disjoint edges.  This lower bound was later improved by Pach and T\'oth \cite{toth} to $\Omega(\log n/\log\log n)$.  Recently, Fox and Sudakov \cite{fox} gave a modest improvement of $\Omega(\log^{1 + \epsilon}n)$, where $\epsilon$ is a very small constant.  We note that the previous two bounds hold for dense simple topological graphs.  Pach and T\'oth conjectured (see problem 5 of chapter 9.5 in \cite{brass}) that there exists a constant $\delta>0$ such that every complete $n$-vertex simple topological graph has at least $\Omega(n^{\delta})$ pairwise disjoint edges.  Our main result settles the conjecture in the affirmative.

\begin{theorem}
\label{main}
Every complete $n$-vertex simple topological graph contains $\Omega(n^{1/3})$ pairwise disjoint edges.
\end{theorem}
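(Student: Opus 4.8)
The plan is to build the pairwise disjoint edges one at a time via a recursion that throws away only a \emph{sublinear} number of vertices per step, so that $\Omega(n^{1/3})$ steps survive. The heart of the matter is the following core statement, which I would try to prove: \emph{every complete simple topological graph $H$ on $m$ vertices contains an edge $e=ab$ and a set $W$ of at least $m - O(m^{2/3})$ vertices, with $a,b \notin W$, such that $e$ is disjoint from every edge of the induced subdrawing $H[W]$.} Granting this, set $H_1=G$ and, having produced $H_i$ on $m_i$ vertices, apply the core statement to obtain $e_i$ and $W_i$ and put $H_{i+1}=H_i[W_i]$. Then $e_i$ is disjoint from every edge of $H_{i+1}\supseteq H_{i+2}\supseteq\cdots$, while each later $e_j$ is an edge of $H_{i+1}$, so the $e_i$ are pairwise disjoint. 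Since $m_{i+1}\ge m_i - O(m_i^{2/3})$, the quantity $m_i^{1/3}$ decreases by only a bounded amount at each step, so the recursion runs for $\Omega(m_1^{1/3})=\Omega(n^{1/3})$ steps and yields $\Omega(n^{1/3})$ pairwise disjoint edges; as every step is algorithmic (rotation data, pigeonholing, and crossing checks are all polynomial-time), these edges are found in polynomial time.

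\textbf{Toward the core statement.} To locate $e$ inside $H$ I would use the rotation at a vertex. Fix $v\in V(H)$ and list the other vertices as $u_1,\dots,u_{m-1}$ in the cyclic order in which the edges $vu_1,\dots,vu_{m-1}$ leave $v$ (cut into a linear order). For $i<j$, simplicity forces $u_iu_j$ to avoid $vu_i$ and $vu_j$, so $vu_i\cup u_iu_j\cup vu_j$ is a simple closed curve bounding a disk $D_{ij}$, and each $u_k$ with $i<k<j$ is classified by whether the arc $vu_k$ remains inside $D_{ij}$ or escapes (crossing $u_iu_j$ exactly once). Reading off this classification along the interval gives a structured coloring of the $u_k$'s; an Erd\H{o}s--Szekeres/monotone-subsequence argument then lets one pass to a large subfamily of the $u_i$ on which the ``short'' edges $u_iu_{i+1}$ (or edges between suitably chosen vertices that behave like vertices in convex position) interact with the rest of the drawing in a controlled, few-crossing manner.

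\textbf{Producing the edge and the main obstacle.} It then remains to show that among these controlled short edges there is one, $e=ab$, for which the set $X(e)$ of vertices incident to some edge crossing $e$ has size $O(m^{2/3})$; taking $W = V(H)\setminus(X(e)\cup\{v,a,b\})$ finishes the core statement, because any edge of $H[W]$ has both endpoints outside $X(e)$ and hence cannot cross $e$. The delicate point -- and what I expect to be the main obstacle -- is precisely this bookkeeping: one must combine simplicity (each pair of edges meets at most once) with the rotation structure at $v$ to bound, then average over an appropriate family of edges, the total ``vertex spread'' of the crossings, and make the threshold come out as $m^{2/3}$ (a weaker estimate here would give no polynomial lower bound at all). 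Getting that exponent out of the crossing count, rather than the trivial $\Omega(m)$, is the crux of the whole argument.
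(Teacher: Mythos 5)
Your reduction of the theorem to the ``core statement'' is clean, and the recursion arithmetic is right: if only $O(m^{2/3})$ vertices are discarded per step, $m^{1/3}$ drops by $O(1)$ each time, so $\Omega(n^{1/3})$ steps survive. But the core statement is exactly where the proof has to happen, and you have not proved it --- you explicitly flag the $O(m^{2/3})$ bound on $|X(e)|$ as ``the main obstacle'' and offer only a sketch of a rotation/monotone-subsequence argument that does not reach it. Worse, in the form you state it the claim is false. Valtr constructed a complete $n$-vertex simple topological graph in which \emph{every} edge crosses at least $\Omega(n^{3/2})$ other edges (this construction is cited in the paper's concluding remarks). If an edge $e$ crosses $\Omega(n^{3/2})$ edges, then all of those edges have both endpoints in $X(e)$, so $\binom{|X(e)|}{2}\geq \Omega(n^{3/2})$ and hence $|X(e)|\geq \Omega(n^{3/4})\gg n^{2/3}$ --- for every edge of the graph. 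So no edge with $|X(e)|=O(m^{2/3})$ exists, and taking $W=V(H)\setminus(X(e)\cup\{v,a,b\})$ cannot work. One could weaken the requirement to a vertex \emph{cover} of the crossing edges of size $O(m^{2/3})$ (Valtr's example only forces covers of size $\Omega(m^{1/2})$), but you give no argument for that either, and it is not at all clear such a cover always exists.

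The paper sidesteps any per-edge crossing bound by arguing globally. Fixing a vertex $v_0$ on the outer face and ordering the remaining vertices by the rotation at $v_0$, it forms a set system on $X=\{v_1,\dots,v_n\}$ whose sets are (i) the vertices lying inside each triangle $K[v_0,v_i,v_j]$ and (ii) the vertices $v_k$ for which $K[v_0,v_k]$ crosses $K[v_i,v_j]$, shows its dual shatter function is $O(m^3)$, and invokes the Chazelle--Welzl low-stabbing-number theorem to obtain a perfect matching on $X$ in which every such set stabs only $O(n^{2/3})$ matching edges. A Tur\'an argument on the resulting conflict graph ($n/2$ vertices, $O(n^{5/3})$ edges) extracts $\Omega(n^{1/3})$ matching edges that pairwise do not stab one another, and a parity argument converts non-stabbing into disjointness of the corresponding topological edges. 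The essential point is that the exponent $2/3$ arises as a stabbing number of a carefully chosen matching, i.e., as an averaged, global quantity --- not as a property of any individual edge, which, as Valtr's example shows, it cannot be.
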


Note that Theorem \ref{main} does not remain true if the \emph{simple} condition is dropped.  Indeed, in \cite{toth}, Pach and T\'oth gave a construction of a complete $n$-vertex topological graph such that every pair of edges intersect exactly once or twice.

\section{The dual shatter function}

In this section, we will recall one of the most useful parameters measuring the complexity of a set system: the dual shatter function.  All of the following concepts and results can be found in Chapter 5 of \cite{matousek}.  Let $(X,\mathcal{S})$ be a set system with ground set $X$, such that $X$ is finite.

\begin{definition}
The \emph{dual shatter function} of $(X,\mathcal{S})$ is a function, denoted by $\pi^*_\mathcal{S}$, whose value at $m$ is defined as the maximum number of equivalence classes on $X$ defined by an $m$-element subfamily $\mathcal{Y}\subset \mathcal{S}$, where two points $x,y \in X$ are equivalent with respect to $\mathcal{Y}$ if $x$ belongs to the same sets of $\mathcal{Y}$ as $y$ does.  In other words, $\pi^*_{\mathcal{S}}(m)$ is the maximum number of nonempty cells in the Venn diagram of $m$ sets of $\mathcal{S}$.
\end{definition}

One of the main tools used to prove Theorem~\ref{main} is the following result of Chazelle and Welzl on matchings with low stabbing number.  A similar approach was done by Pach in \cite{pach2}, who showed that every $n$-vertex complete geometric graph contains $\Omega(n^{1/2})$ pairwise parallel edges, where two edges are \emph{parallel} if they are the opposite sides of a convex quadrilateral.

 Given a set system $(X,\mathcal{S})$ and a graph $G = (X,E)$, we say that a set $S \in \mathcal{S}$ \emph{stabs} edge $uv \in E(G)$ if $|S\cap \{u,v\}| = 1$.  The \emph{stabbing number of $G$ with respect to the set $S$} is the number of edges of $G$ stabbed by $S$, and the \emph{stabbing number of $G$} is the maximum of stabbing numbers of $G$ with respect to all sets of $\mathcal{S}$.

\begin{lemma}[\cite{welzl}]
\label{match}
Let $n$ be an even integer, and $\mathcal{S}$ be a set system on an $n$-point set $X$ with $\pi^*_{\mathcal{S}}(m) \leq Cm^d$ for all $m$, where $C$ and $d$ are constants.  Then there exists a perfect matching $M$ on $X$ (i.e. a set of $n/2$ vertex disjoint edges) whose stabbing number is at most $C_1n^{1 - 1/d}$, where $C_1$ is a constant that depends only on $C$ and $d$.

\end{lemma}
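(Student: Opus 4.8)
The plan is to prove Lemma~\ref{match} (the Chazelle--Welzl matching-with-low-stabbing-number theorem) by constructing the matching greedily, one edge at a time, while maintaining a carefully chosen probability measure on the sets of $\mathcal{S}$ that penalizes sets already stabbing many edges. The central idea is a multiplicative-weights / reweighting argument: we process the $n/2$ edges of the matching sequentially, and after each edge is added we double (or more generally multiply by a fixed factor) the weight of every set in $\mathcal{S}$ that stabs the newly chosen edge. At each step we must choose, among the currently unmatched vertices, a pair whose addition increases the total weight as little as possible, so that no single set ever accumulates too large a share of the weight and hence too large a stabbing number.

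The key steps, in order, are as follows. First, I would set up a weight function $w \colon \mathcal{S} \to \RR_{>0}$, initialized uniformly, and define the total weight $W = \sum_{S} w(S)$. Second, for a candidate edge $uv$ I would measure its cost by the total weight $\sum_{S \text{ stabs } uv} w(S)$ of sets separating $u$ from $v$; the crucial combinatorial input is that among any set of unmatched vertices there must exist a pair whose separating weight is a small fraction of $W$. This is exactly where the dual shatter function bound $\pi^*_{\mathcal{S}}(m) \le C m^d$ enters: a classical packing/counting argument shows that the points of $X$ can be clustered so that most pairs within a cluster are separated by sets of small total weight, with the cluster sizes governed by the exponent $d$. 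Third, after adding the cheapest available edge, I would multiply the weight of each set stabbing it by a constant factor, and then track how $W$ grows over the $n/2$ rounds. Finally, comparing the upper bound on $W$ (from the per-step cost control) against the lower bound forced on $w(S)$ for any set $S$ that stabs $k$ edges (its weight has been multiplied $k$ times), I would solve for the maximum possible $k$ and obtain $k = O(n^{1-1/d})$, with the implied constant depending only on $C$ and $d$.

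I expect the main obstacle to be the clustering/packing step that guarantees the existence of a cheap edge at every stage, since this is where the quantitative dependence on the dual shatter function is extracted and where the exponent $1 - 1/d$ is actually produced. Concretely, one must show that if we partition $X$ into roughly $r$ groups, the number of ``crossing'' set-boundaries is controlled by $\pi^*_{\mathcal{S}}$ applied to an appropriate scale, so that optimizing over $r$ balances the intra-cluster cost against the number of clusters and yields the $n^{1-1/d}$ savings. Making the greedy selection run in polynomial time (which the theorem implicitly requires, given the algorithmic emphasis of the paper) is a secondary but genuine concern: the weight-update and the cheapest-pair search are both polynomial, but one should verify that only polynomially many distinct cells of the Venn diagram need to be examined, which again follows from the polynomial bound $\pi^*_{\mathcal{S}}(m) \le C m^d$. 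I would therefore devote the bulk of the argument to the packing lemma and treat the reweighting bookkeeping as routine once that estimate is in hand.
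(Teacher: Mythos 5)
Your proposal is attempting to reconstruct a result that the paper itself does not prove: Lemma~\ref{match} is imported wholesale from Chazelle and Welzl \cite{welzl} (see also Chapter~5 of \cite{matousek}), so there is no in-paper argument to compare against. Judged on its own merits, your outline is the standard proof strategy for this theorem: build the matching greedily, reweight (double) the sets stabbing each chosen edge, and at the end compare the growth of the total weight $W$ against the lower bound $2^k$ for a set stabbing $k$ matching edges. That bookkeeping is indeed routine, with one small caveat you omit: the final comparison also involves the initial total weight, so one needs $\log|\mathcal{S}|$ to be negligible against $n^{1-1/d}$; this holds because the bounded dual shatter function forces the number of distinct sets to be polynomial in $n$ (via dual VC dimension and Sauer--Shelah), but it has to be said.

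The genuine gap is precisely the step you flagged as the main obstacle, and it is larger than you suggest. What the reweighting scheme needs is: among any $k$ unmatched points, there is a pair separated by sets of total weight $O\bigl(W/k^{1/d}\bigr)$. Your ``clustering/packing'' paragraph is not a proof of this, and the natural elementary argument --- sample $r \approx (k/C)^{1/d}$ sets independently with probabilities proportional to their weights, note that the Venn diagram of the sample has at most $Cr^d < k$ nonempty cells so some pair is unseparated by the sample, then apply a union bound over all $\binom{k}{2}$ pairs --- only yields a pair of separating weight $O\bigl(W\log k/k^{1/d}\bigr)$. Propagated through the $n/2$ rounds, this gives stabbing number $O\bigl(n^{1-1/d}\log n\bigr)$, which falls short of the stated bound $C_1 n^{1-1/d}$ (and would correspondingly degrade Theorem~\ref{main} to $\Omega(n^{1/3}/\log n)$). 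Eliminating the logarithm is not a refinement of the bookkeeping; it requires Haussler's packing lemma in dual form --- if every pair among $k$ points is separated by weight more than $\delta W$ in a system with dual shatter function $O(m^d)$, then $k = O(\delta^{-d})$ --- which is a substantial theorem with its own nontrivial probabilistic proof. So your plan is the right skeleton, but the crux you deferred is exactly where the stated exponent lives, and as written the proposal does not deliver the lemma. (Polynomial-time computability, which you worry about, is not part of the lemma's statement; the paper only invokes it in the concluding remarks, again citing \cite{welzl}.)
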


We will also need the following two lemmas on the dual shatter function of a set system.

\begin{lemma}
\label{union}
Let $\mathcal{S}_1,\mathcal{S}_2$ be set systems on a set $X$ and let $\mathcal{S} = \mathcal{S}_1\cup \mathcal{S}_2$.  Then $\pi_\mathcal{S}^*(m) \leq \pi_{\mathcal{S}_1}^*(m)\pi_{\mathcal{S}_2}^*(m)$.

\end{lemma}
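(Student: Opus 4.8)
The plan is to argue directly from the characterization of the dual shatter function as the maximum number of nonempty Venn-diagram cells. Fix an arbitrary $m$-element subfamily $\mathcal{Y} \subseteq \mathcal{S}$; it suffices to show that $\mathcal{Y}$ partitions $X$ into at most $\pi^*_{\mathcal{S}_1}(m)\,\pi^*_{\mathcal{S}_2}(m)$ equivalence classes, since $\pi^*_{\mathcal{S}}(m)$ is the maximum of this quantity over all such $\mathcal{Y}$. Because $\mathcal{S} = \mathcal{S}_1 \cup \mathcal{S}_2$, I would write $\mathcal{Y} = \mathcal{Y}_1 \cup \mathcal{Y}_2$ with $\mathcal{Y}_1 \subseteq \mathcal{S}_1$ and $\mathcal{Y}_2 \subseteq \mathcal{S}_2$ (sets lying in both $\mathcal{S}_1$ and $\mathcal{S}_2$ may be assigned to either part; it makes no difference), so that $|\mathcal{Y}_1| \le m$ and $|\mathcal{Y}_2| \le m$.

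The key observation is that the equivalence relation on $X$ induced by $\mathcal{Y}$ is precisely the common refinement of the relations induced by $\mathcal{Y}_1$ and by $\mathcal{Y}_2$: two points $x, y \in X$ lie in the same sets of $\mathcal{Y}$ if and only if they lie in the same sets of $\mathcal{Y}_1$ and in the same sets of $\mathcal{Y}_2$. Consequently every equivalence class of $\mathcal{Y}$ is the intersection of an equivalence class of $\mathcal{Y}_1$ with an equivalence class of $\mathcal{Y}_2$, and hence the number of nonempty classes of $\mathcal{Y}$ is at most the product of the number of classes of $\mathcal{Y}_1$ and the number of classes of $\mathcal{Y}_2$.

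Finally I would bound each factor by the appropriate dual shatter function: the number of equivalence classes induced by $\mathcal{Y}_i$ is at most $\pi^*_{\mathcal{S}_i}(|\mathcal{Y}_i|) \le \pi^*_{\mathcal{S}_i}(m)$, where the last step uses that $\pi^*_{\mathcal{S}_i}$ is non-decreasing in its argument (a subfamily of fewer than $m$ sets can, if necessary, be enlarged to one of size $m$ without decreasing the number of cells). Multiplying these two estimates and taking the maximum over all choices of $\mathcal{Y}$ gives $\pi^*_{\mathcal{S}}(m) \le \pi^*_{\mathcal{S}_1}(m)\,\pi^*_{\mathcal{S}_2}(m)$.

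I do not anticipate any genuine obstacle in this argument; the only points requiring a little care are the harmless bookkeeping for sets belonging to both $\mathcal{S}_1$ and $\mathcal{S}_2$, and the (standard) monotonicity of $\pi^*$, neither of which presents any real difficulty.
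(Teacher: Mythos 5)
Your proposal is correct and follows essentially the same route as the paper: split the $m$ chosen sets between $\mathcal{S}_1$ and $\mathcal{S}_2$, observe that the resulting partition of $X$ is the common refinement of the two partitions, and bound each factor using monotonicity of $\pi^*$. You merely spell out the refinement argument and the monotonicity step that the paper leaves implicit.
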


\begin{proof}

Suppose that $\pi^*_{\mathcal{S}}(m)$ is maximized for some $m_1$ sets in $\mathcal{S}_1$ and $m_2$ sets in $\mathcal{S}_2$ where $m_1 + m_2 = m$.  Then the number of nonempty cells in the Venn diagram of these $m$ sets is at most

$$\pi^*_{\mathcal{S}_1}(m_1)\pi^*_{\mathcal{S}_2}(m_2) \leq \pi^*_{\mathcal{S}_1}(m)\pi^*_{\mathcal{S}_2}(m).$$

\end{proof}

\begin{lemma}
\label{expression}
Let $\Phi(X_1,X_2,...,X_t)$ be a fixed set-theoretic expression (using the operations of union, intersection, and difference) with variables $X_1,...,X_t$ standing for sets.  Let $\mathcal{S}$ be a set system on a set $X$.  Let $\mathcal{T}$ consist of all sets $\Phi(S_1,...,S_t)$ for all possible choices $S_1,...,S_t \in \mathcal{S}$.  If $\pi^*_\mathcal{S}(m) \leq Cm^d$ for every $m$, then $\pi^*_\mathcal{T}(m) \leq Ct^dm^d$.

\end{lemma}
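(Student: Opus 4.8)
The plan is to reduce the statement about the composite set system $\mathcal{T}$ to the union lemma (Lemma \ref{union}) together with the elementary observation that applying a fixed Boolean combination to sets cannot increase the number of Venn-diagram cells. First I would fix an arbitrary $m$-element subfamily $\mathcal{U} = \{T_1,\dots,T_m\} \subseteq \mathcal{T}$. Each $T_i$ is of the form $\Phi(S_{i,1},\dots,S_{i,t})$ for some choice of $S_{i,j} \in \mathcal{S}$. Collecting all the sets that appear in these expressions gives a subfamily $\mathcal{Y} \subseteq \mathcal{S}$ of size at most $tm$.

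The key step is then: any two points $x,y \in X$ that are equivalent with respect to $\mathcal{Y}$ (i.e. lie in exactly the same sets of $\mathcal{Y}$) are also equivalent with respect to $\mathcal{U}$. Indeed, membership of a point in $T_i = \Phi(S_{i,1},\dots,S_{i,t})$ is a function only of which of the sets $S_{i,1},\dots,S_{i,t}$ contain that point, since union, intersection, and difference act pointwise; and all of $S_{i,1},\dots,S_{i,t}$ belong to $\mathcal{Y}$. Hence the partition of $X$ induced by $\mathcal{U}$ is a coarsening of the partition induced by $\mathcal{Y}$, so the number of cells in the Venn diagram of $\mathcal{U}$ is at most the number of cells in the Venn diagram of $\mathcal{Y}$, which is at most $\pi^*_{\mathcal{S}}(tm) \leq C(tm)^d = Ct^d m^d$. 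Taking the maximum over all choices of $\mathcal{U}$ gives $\pi^*_{\mathcal{T}}(m) \leq Ct^d m^d$, as required.

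I do not anticipate a serious obstacle here; the only point requiring mild care is bounding $|\mathcal{Y}|$ correctly — each of the $m$ expressions contributes at most $t$ sets, so $|\mathcal{Y}| \le tm$, and one then invokes monotonicity of $\pi^*_{\mathcal{S}}$ (which is immediate from the definition, since a subfamily of fewer sets yields no more cells) to pass from $\pi^*_{\mathcal{S}}(|\mathcal{Y}|)$ to $\pi^*_{\mathcal{S}}(tm)$. Alternatively, one could iterate Lemma \ref{union} by writing $\mathcal{T}$ as contained in a union of $t$ "copies" indexed by the coordinate position, but the direct argument via the pointwise nature of Boolean operations is cleaner and yields the stated constant directly.
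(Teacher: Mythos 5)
Your proof is correct and is essentially the paper's own argument: the paper likewise observes that each cell of the Venn diagram of the $m$ composite sets is a union of cells of the Venn diagram of the at most $tm$ underlying sets of $\mathcal{S}$, giving $\pi^*_{\mathcal{T}}(m) \leq \pi^*_{\mathcal{S}}(tm) \leq Ct^dm^d$. You merely spell out the pointwise/coarsening justification in more detail than the paper does.
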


\begin{proof}
Each set in $\mathcal{T}$ is a disjoint union of some cells of the Venn diagram of some $t$ sets in $\mathcal{S}$.  Therefore, given any $m$ sets in $\mathcal{T}$, each cell in the Venn diagram of these $m$ sets is a disjoint union of some cells in the Venn diagram of at most $tm$ sets of $\mathcal{S}$.  Hence

$$\pi^*_\mathcal{T}(m) \leq \pi^*_\mathcal{S}(tm) \leq Ct^dm^d.$$
\end{proof}

\section{Proof of Theorem~\ref{main}}

Let $K = (V,E)$ be a complete simple topological graph with $n+1$ vertices.  Without loss of generality, we can assume $n$ is even.  Notice that the edges of $K$ divide the plane into several cells (regions), one of which is unbounded.  We can assume that there is a vertex $v_0 \in V$ such that $v_0$ lies on the boundary of the unbounded cell.  Indeed, otherwise we can project $K$ onto a sphere, then choose an arbitrary vertex $v_0$ and then project $K$ back to the plane such that $v_0$ lies on the boundary of the unbounded cell, and moreover two edges cross (are disjoint) in the new drawing if and only if they crossed (were disjoint) in the original drawing.

Given a subset $V' \subset V$, we denote by $K[V']$ the topological subgraph induced by the vertex set $V'$.  Since we will be dealing with both combinatorial and topological graphs, the term \emph{edge} will refer to an edge in a combinatorial graph and will be denoted by pairs of vertices $v_iv_j$.  The term \emph{topological edge} will refer to edges in the topological graph $K$, and will be denoted by $K[v_i,v_j]$ (i.e. the topological subgraph induced by vertices $v_i$ and $v_j$).

Consider the topological edges emanating out from $v_0$, and label their endpoints $v_1,...,v_n$ in counterclockwise order so that for $i < j$, the vertices $(v_0,v_i,v_j)$ appear in counterclockwise order along the simple closed curve $K[v_0,v_i,v_j]$.  This is possible since $v_0$ lies on the unbounded cell and these three edges do not cross.  For all pairs $i,j$, we will call the topological subgraph $K[v_0,v_i,v_j]$ a \emph{triangle} in $K$.  Note that each ``triangle" is incident to $v_0$, and is a simple closed curve since $K$ is simple.

Set $X = \{v_1,...,v_n\}$, and now we will define the set system $(\mathcal{S}_1,X)$ as follows.  For each pair $i,j$ that satisfies $1 \leq i < j \leq n$, we define the set $S_{i,j}$ to be the set of vertices in $X$ that lie in the interior of triangle $K[v_0,v_i,v_j]$.  See Figure~\ref{set1} for a small example. Then set

$$\mathcal{S}_1 = \bigcup\limits_{1 \leq i < j \leq n} \left\{S_{i,j}\right\}.$$

\begin{lemma}
\label{s1}
Let $(\mathcal{S}_1,X)$ be the set system defined as above.  Then $\pi^*_{\mathcal{S}_1}(m) \leq 5m^2$.

\end{lemma}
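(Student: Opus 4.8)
The goal is to bound $\pi^*_{\mathcal{S}_1}(m) \le 5m^2$, i.e., the number of nonempty cells in the Venn diagram of $m$ sets $S_{i,j}$ chosen from $\mathcal{S}_1$. Each set $S_{i,j}$ records which vertices of $X$ lie inside the triangle (simple closed curve) $K[v_0,v_i,v_j]$.

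Key idea: relate the Venn-diagram cells of the chosen $m$ sets to the planar arrangement of the boundary curves of the corresponding $m$ triangles. Each triangle $K[v_0,v_i,v_j]$ is bounded by three arcs: the topological edges $K[v_0,v_i]$, $K[v_0,v_j]$, and $K[v_i,v_j]$. A vertex $v_k \in X$ belongs to $S_{i,j}$ iff $v_k$ lies in the open interior of this Jordan curve. Two vertices $v_k, v_\ell$ lie in the same Venn cell iff, for every chosen triangle, they are on the same side (both inside or both outside). So two vertices are separated by the Venn diagram only if some chosen triangle's boundary separates them — meaning the two points lie in different faces of the planar arrangement formed by the union of the boundaries of the $m$ chosen triangles. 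Hence $\pi^*_{\mathcal{S}_1}(m)$ is at most the number of faces in the arrangement of these boundary curves.

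So the plan is: (1) observe that each triangle contributes at most 3 arcs to the arrangement, but many of these arcs coincide — the arcs $K[v_0, v_i]$ are shared among triangles sharing the index $i$, so $m$ triangles use at most $2m$ distinct "spoke" arcs $K[v_0,\cdot]$ plus at most $m$ "chord" arcs $K[v_i,v_j]$, and in fact one can bound the total number of curves involved by something linear in $m$; (2) since $K$ is a simple topological graph, any two of these arcs cross at most once, so the arrangement of $O(m)$ pseudo-segments has $O(m^2)$ faces by the standard planar-arrangement bound (vertices $+$ edges $+$ faces via Euler's formula, with the number of crossings being $O(m^2)$); (3) track the constant carefully to arrive at the claimed $5m^2$. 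Concretely, I expect one counts: the $m$ curves $K[v_i,v_j]$ pairwise cross at most $\binom{m}{2}$ times, the spokes through $v_0$ don't cross each other (they only meet at $v_0$), each spoke crosses each chord at most once, and one applies Euler's formula to the resulting plane graph — using that it's connected through $v_0$ — to count faces. The bookkeeping should yield a bound of the form $cm^2$ with $c \le 5$.

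The main obstacle is getting the constant down to $5$ rather than merely $O(m^2)$: this requires being economical about which arcs actually appear, exploiting that all triangles share the vertex $v_0$ (so all the spoke arcs emanate from a single point and are pairwise non-crossing), and possibly that the arcs $K[v_0,v_i]$ and $K[v_0,v_j]$ together with $K[v_i,v_j]$ bound a region in a controlled way. I would set up the arrangement graph $H$ whose vertices are $v_0$, the $v_k$'s appearing as endpoints, and all crossing points among the chosen arcs; bound $|V(H)|$, $|E(H)|$, and then $|F(H)| = |E(H)| - |V(H)| + 1 + (\text{components})$ via Euler, and check the arithmetic gives at most $5m^2$ faces (hence at most that many Venn cells) for all $m \ge 1$, handling small $m$ directly if the asymptotic bound is not yet tight there.
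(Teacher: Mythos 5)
Your plan takes essentially the same route as the paper: bound the number of nonempty Venn cells by the number of faces in the planar arrangement of the (at most) $3m$ arcs bounding the $m$ chosen triangles, using the simplicity of $K$ to ensure that any two arcs cross at most once and hence that there are only $O(m^2)$ crossings. The paper finishes the face count not with Euler's formula but with a one-line induction --- the $m$th triangle's three edges each cross the existing at most $3(m-1)$ edges at most once, so the new closed curve creates at most $9(m-1)$ new regions, giving a total of $5(m-1)^2+9(m-1)\le 5m^2$ --- which disposes of the constant-$5$ bookkeeping you were (rightly, but unnecessarily) worried about.
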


\begin{proof}
In order to obtain an upper bound on $\pi^*_{\mathcal{S}_1}(m)$, we simply need to bound the maximum number of regions for which $m$ triangles in $K$ partitions the plane into.  We proceed by induction on $m$.  The base case when $m = 1$ is trivial.  Now assume that the statement holds up to $m-1$.  Notice that any set of $m-1$ triangles consists of at most $3(m-1)$ topological edges of $K$.  Therefore, when we add the $m$th triangle $T$, each topological edge in $T$ creates at most $3(m-1)$ new crossing points (since $K$ is simple).  Hence triangle $T$ creates at most $9(m-1)$ new regions in the arrangement.  By the induction hypothesis, we have at most

$$5(m-1)^2 + 9(m-1) \leq 5m^2$$

\noindent cells created by the $m$ triangles.

\end{proof}

Now we define the set system $(\mathcal{S}_2,X)$ as follows.  For each pair $i,j$ that satisfies $1\leq i < j \leq n$, we define the set $S'_{i,j} = \{v_k\in X : K[v_0,v_k]\textnormal{ crosses }K[v_i,v_j]\}$.  See Figure~\ref{set2} for a small example.  Then set

$$\mathcal{S}_2 = \bigcup\limits_{1\leq i < j \leq n}\left\{S'_{i,j}\right\}.$$

 \begin{figure}
  \centering
  \subfigure[$S_{3,6} = \{v_{1},v_4\}$.]{\label{set1}\includegraphics[width=0.3\textwidth]{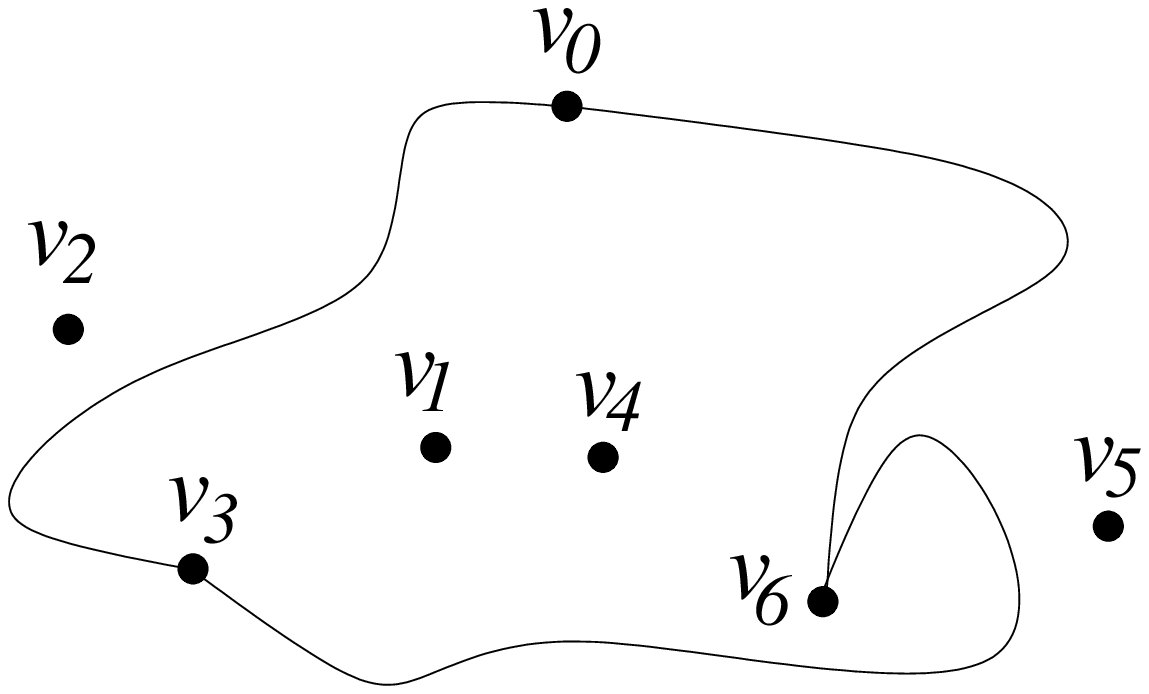}}\hspace{2cm}
\subfigure[$S'_{3,6} = \{v_{1},v_5\}$.]{\label{set2}\includegraphics[width=0.3\textwidth]{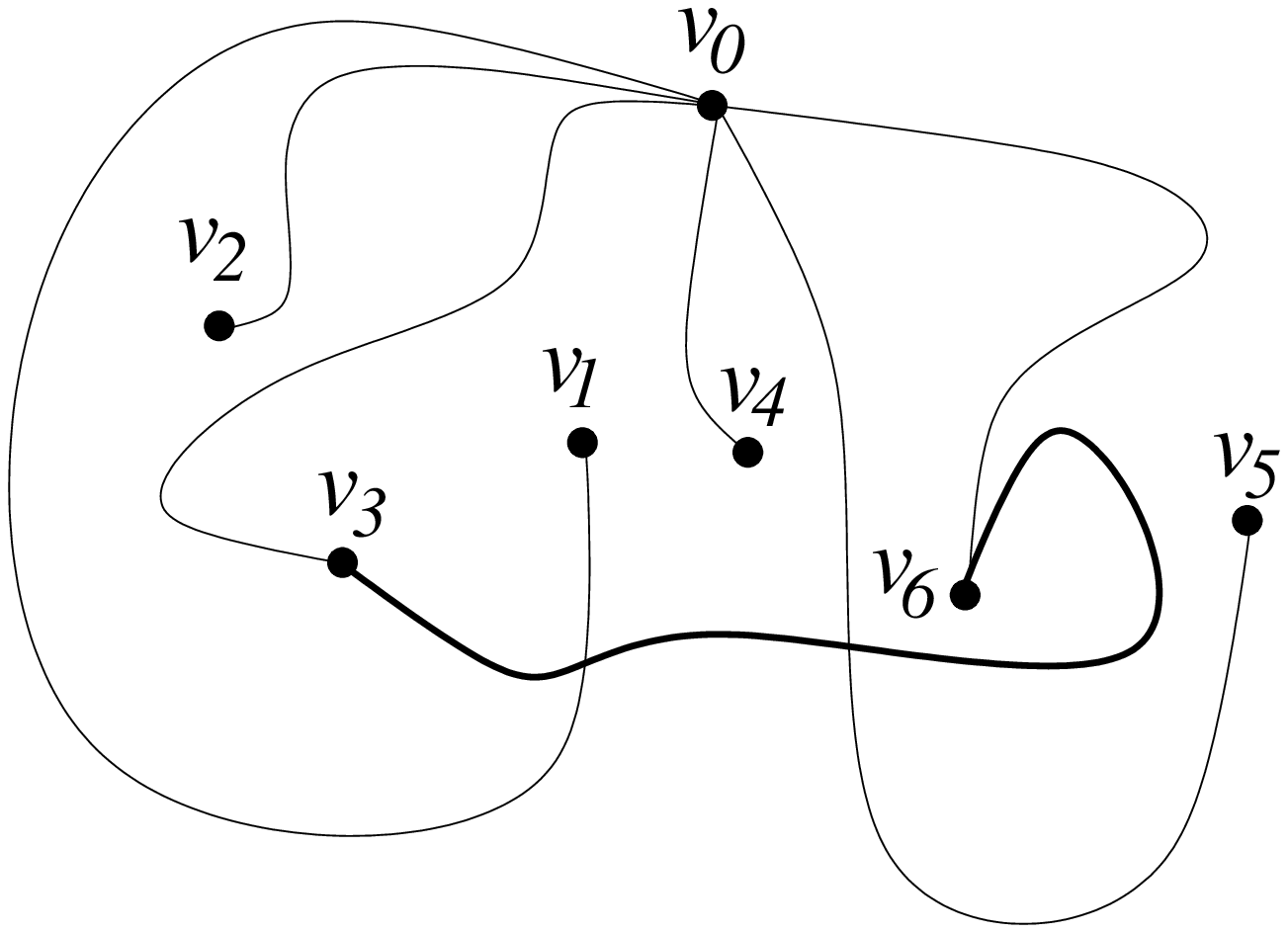}}
 \caption{Sets $S_{3,6}\in \mathcal{S}_1$ and $S'_{3,6}\in \mathcal{S}_2$.}
\end{figure}

\begin{lemma}
Let $\mathcal{S} = \mathcal{S}_1 \cup \mathcal{S}_2$ where $\mathcal{S}_1,\mathcal{S}_2$ are defined as above.  Then $\pi^*_{\mathcal{S}}(m) \leq 120m^3$.

\end{lemma}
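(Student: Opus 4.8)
The plan is to show that every set of $\mathcal{S}_2$ is the symmetric difference of a set of $\mathcal{S}_1$ with an ``interval'' on the linearly ordered index set, and then to read off the bound by combining dual shatter functions. The crucial step is the identity
$$S'_{i,j} = S_{i,j}\,\triangle\, I_{i,j}, \qquad I_{i,j} := \{v_k \in X : i < k < j\},$$
valid for all $1 \le i < j \le n$. To prove it, fix $i<j$ and a vertex $v_k$ with $k \ne i,j$, and consider the triangle $T = K[v_0,v_i,v_j]$, a simple closed curve. Away from the common endpoint $v_0$, the topological edge $K[v_0,v_k]$ meets $T$ only where it crosses the side $K[v_i,v_j]$ --- it shares $v_0$ with the two other sides and hence, by simplicity, cannot cross them, and its interior avoids the vertices $v_i, v_j$ --- and by simplicity it crosses $K[v_i,v_j]$ at most once; so $K[v_0,v_k]$ meets $T$ in $0$ or $1$ points besides $v_0$. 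In a small disk about $v_0$ the edges $K[v_0,v_i], K[v_0,v_k], K[v_0,v_j]$ leave $v_0$ in this counterclockwise cyclic order exactly when $i<k<j$, and in that case the initial arc of $K[v_0,v_k]$ enters the interior of $T$, while for $k<i$ or $k>j$ it enters the exterior; this is where the counterclockwise labelling of $v_1,\dots,v_n$ and the orientation convention on the triangles are used. The other endpoint $v_k$ lies in the interior of $T$ precisely when $v_k \in S_{i,j}$. By the Jordan curve theorem the number of times $K[v_0,v_k]$ crosses $K[v_i,v_j]$ is odd iff exactly one of ``$i<k<j$'' and ``$v_k \in S_{i,j}$'' holds; since this number is $0$ or $1$, the identity follows. (When $k=i$ or $k=j$ all three sets omit $v_k$, so it holds trivially there too.)

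With the identity in hand, let $\mathcal{I} = \{I_{i,j} : 1 \le i < j \le n\}$. Any $m$ open intervals on $\{1,\dots,n\}$ have at most $2m$ endpoints, which cut $\{1,\dots,n\}$ into at most $2m+1$ blocks of constant membership pattern, so $\pi^*_{\mathcal{I}}(m) \le 2m+1$. Put $\mathcal{A} = \mathcal{S}_1 \cup \mathcal{I}$; by Lemma~\ref{union} together with Lemma~\ref{s1},
$$\pi^*_{\mathcal{A}}(m) \le \pi^*_{\mathcal{S}_1}(m)\,\pi^*_{\mathcal{I}}(m) \le 5m^2(2m+1) \le 15m^3 \qquad (m \ge 1).$$
Every set of $\mathcal{S} = \mathcal{S}_1 \cup \mathcal{S}_2$ is either a single set of $\mathcal{A}$ (those in $\mathcal{S}_1$) or, by the identity, the symmetric difference of two sets of $\mathcal{A}$ (those in $\mathcal{S}_2$). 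Hence any $m$ sets of $\mathcal{S}$ are Boolean combinations of some $\mathcal{B} \subseteq \mathcal{A}$ with $|\mathcal{B}| \le 2m$, and the partition of $X$ they induce is refined by the partition induced by $\mathcal{B}$ (the same observation as in the proof of Lemma~\ref{expression}). Therefore $\pi^*_{\mathcal{S}}(m) \le \pi^*_{\mathcal{A}}(2m) \le 15(2m)^3 = 120m^3$.

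The substantive obstacle is the geometric identity for $S'_{i,j}$: one has to get the orientation bookkeeping at $v_0$ exactly right, so that ``$i<k<j$'' is precisely the condition for $K[v_0,v_k]$ to leave $v_0$ into the interior of the triangle, and one has to invoke simplicity of $K$ twice --- first to discard crossings of $K[v_0,v_k]$ with the two sides of the triangle through $v_0$, and then again to force the crossing count with $K[v_i,v_j]$ to be $0$ or $1$, which is what upgrades the Jordan-curve parity statement to an exact equality. After that the conclusion is routine manipulation of dual shatter functions via Lemmas~\ref{union} and~\ref{expression}.
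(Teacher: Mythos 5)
Your proof is correct and follows essentially the same route as the paper: the decomposition $S'_{i,j} = (S_{i,j}\setminus I_{i,j})\cup(I_{i,j}\setminus S_{i,j})$ with the interval system $\mathcal{I}$, the bound $\pi^*_{\mathcal{S}_1\cup\mathcal{I}}(m)\le 15m^3$ via Lemmas~\ref{union} and~\ref{s1}, and the doubling $\pi^*_{\mathcal{S}}(m)\le \pi^*_{\mathcal{A}}(2m)$ as in Lemma~\ref{expression} (the paper adjoins $\{\emptyset\}$ to handle the sets of $\mathcal{S}_1$ uniformly, while you handle them directly by the refinement observation --- an immaterial difference). Your careful Jordan-curve/parity justification of the symmetric-difference identity is a correct expansion of what the paper merely asserts with ``Notice that\dots''.
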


\begin{proof}
Let $i,j$ be fixed such that $i < j$.  Notice that if $k$ is an integer that satisfies $k < i$ or $k > j$, then $v_k \in S'_{i,j} \in \mathcal{S}_2$ if and only if $v_k \in S_{i,j} \in \mathcal{S}_1$.  Moreover, if $k$ is an integer that satisfies $i < k < j$, then $v_k \in S'_{i,j} \in \mathcal{S}_2$ if and only if $v_k \not\in S_{i,j} \in \mathcal{S}_1$.  In what follows, we will apply Lemmas~\ref{union} and \ref{expression} in order to obtain the set system $\mathcal{S} = \mathcal{S}_1\cup \mathcal{S}_2$.  This will ensure that the dual shatter function $\pi^*_{\mathcal{S}}$ is \emph{well behaved}.

Let $(\mathcal{I},X)$ be a set system such that

$$\mathcal{I} = \bigcup\limits_{1 \leq i < j \leq n} \left\{I_{ij}\right\},$$

\noindent where $I_{ij} = \{v_k : i < k < j\}$.  It is easy to see that $\pi^*_{\mathcal{I}}(m) \leq 2m+1\leq 3m$, since this is equivalent to determining the number of cells created by $m$ intervals on the real line. Let $\mathcal{T}_1$ be the set system defined as $\mathcal{T}_1 = \mathcal{S}_1\cup \mathcal{I}\cup \{\emptyset\}$.  By Lemmas~\ref{union} and \ref{s1}, we have $\pi^*_{\mathcal{T}_1}(m) \leq 15m^3$.  Notice that each set $S'_{ij} \in \mathcal{S}_2$ can be expressed as

$$S'_{ij} = (S_{ij} \setminus I_{ij}) \cup (I_{ij}\setminus S_{ij})$$

\noindent where $S_{ij} \in \mathcal{S}_1$ and $I_{ij} \in \mathcal{I}$ are defined above.  Therefore, let $\Phi(X_1,X_2)$ be the set-theoretic expression defined as

$$\Phi(X_1,X_2) = (X_1\setminus X_2) \cup (X_2\setminus X_1),$$

\noindent and let $\mathcal{T}_2$ consist of all sets $\Phi(X_1,X_2)$ for all possible choices $X_1,X_2 \in \mathcal{T}_1$.  Hence $\mathcal{S} = \mathcal{S}_1\cup \mathcal{S}_2 \subset \mathcal{T}_2$.  By Lemma~\ref{expression}, we have

$$\pi^*_{\mathcal{S}}(m) \leq \pi^*_{\mathcal{T}_2}(m)\leq \pi^*_{\mathcal{T}_1}(2m) \leq 15(2m)^3   = 120m^3.$$

\end{proof}

By setting $\mathcal{S} = \mathcal{S}_1\cup \mathcal{S}_2$, we know that $\pi^*_{\mathcal{S}}(m) \leq 120m^3$.  Therefore we can apply Lemma~\ref{match} to $(\mathcal{S},X)$ and find a perfect matching $M$ on $X$ whose stabbing number is at most $Cn^{2/3}$, where $C$ is an absolute constant.  In other words, each set in $S \in \mathcal{S}$ stabs at most $Cn^{2/3}$ members in $M$.

Now let $G = (V,E)$ be an $(n/2)$-vertex graph, where $V(G) = M$, and two vertices $v_{i}v_j$, $v_kv_l$ in $G$ (edges in $M$) are adjacent if and only if

\begin{enumerate}

\item $S_{i,j} \in \mathcal{S}_{1}$ stabs $v_kv_l$, or

\item $S'_{i,j} \in \mathcal{S}_2$ stabs $v_kv_l$, or

\item $S_{k,l} \in  \mathcal{S}_1 $ stabs $v_iv_j$, or

\item $S'_{k,l} \in \mathcal{S}_2$ stabs $v_iv_j$.

\end{enumerate}

Since $M$ has a stabbing number at most $Cn^{2/3}$, the two sets corresponding to each vertex in $G$ stab (in total) at most $2\cdot Cn^{2/3}$ other edges in $M$.   A simple counting argument shows that $G$ contains at most $Cn^{5/3}$ edges.  Therefore we can use the following well known theorem of Tur\'an to find a large independent set in $M$.

\begin{theorem}
\label{turan}
Every graph with $n$ vertices and $e$ edges contains an independent set of size at least $n^2/(2e + n)$.

\end{theorem}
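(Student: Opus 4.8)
The statement is the classical Turán/Caro--Wei lower bound on the independence number, and the plan is to prove it in two short steps. Fix a graph $G=(V,E)$ with $|V|=n$ and $|E|=e$, write $\alpha(G)$ for its independence number and $d(v)$ for the degree of a vertex $v$. Step one is the Caro--Wei inequality $\alpha(G)\ge\sum_{v\in V}\frac{1}{d(v)+1}$, and step two lower-bounds the right-hand side using convexity.

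For step one I would run a random-permutation argument. Choose a linear order $\prec$ on $V$ uniformly at random, and let $I$ be the set of all $v\in V$ that precede every neighbour, i.e. $v\prec u$ for all $u$ adjacent to $v$. Then $I$ is independent, since two adjacent vertices $u,v\in I$ would force both $u\prec v$ and $v\prec u$. For a fixed vertex $v$, the event $v\in I$ is exactly the event that $v$ is the $\prec$-minimum among the $d(v)+1$ vertices of $\{v\}\cup N(v)$, which has probability $\frac{1}{d(v)+1}$; hence by linearity of expectation $\EE[\,|I|\,]=\sum_{v\in V}\frac{1}{d(v)+1}$. Therefore some order $\prec$ produces an independent set of size at least $\sum_{v\in V}\frac{1}{d(v)+1}$. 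This argument is constructive: the deterministic greedy procedure that repeatedly adds a current minimum-degree vertex to $I$ and then deletes it together with its neighbourhood can be shown to achieve the same bound, and runs in polynomial time.

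For step two, the function $t\mapsto\frac{1}{t+1}$ is convex on $[0,\infty)$, so Jensen's inequality applied to the multiset of degrees, together with the handshake identity $\sum_{v\in V}d(v)=2e$, gives
\[
\sum_{v\in V}\frac{1}{d(v)+1}\;\ge\;n\cdot\frac{1}{\frac{1}{n}\sum_{v\in V}d(v)+1}\;=\;n\cdot\frac{1}{\frac{2e}{n}+1}\;=\;\frac{n^2}{2e+n}.
\]
Combining the two steps yields $\alpha(G)\ge\frac{n^2}{2e+n}$, as claimed.

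I expect no genuine obstacle; the only step that is not completely mechanical is the use of convexity. If one prefers to avoid Jensen's inequality, the Caro--Wei inequality itself can be obtained by induction on $n$: deleting a maximum-degree vertex $v$ (the case $d(v)=0$ being a trivial base case), the only terms that change are those of the $\Delta=d(v)$ neighbours of $v$, and their total increase $\sum_{u\in N(v)}\frac{1}{d(u)(d(u)+1)}\ge\Delta\cdot\frac{1}{\Delta(\Delta+1)}=\frac{1}{\Delta+1}$ exactly absorbs the contribution $\frac{1}{\Delta+1}$ of the deleted vertex, so $\alpha(G)\ge\alpha(G-v)\ge\sum_{v}\frac{1}{d(v)+1}$; the same averaging computation then finishes the proof.
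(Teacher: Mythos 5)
Your proof is correct. Note, however, that the paper offers no proof of Theorem~\ref{turan} at all: it is quoted as a well-known result and attributed to the reference by Alon and Spencer, so there is nothing to compare against. The argument you give -- the Caro--Wei bound $\alpha(G)\ge\sum_v 1/(d(v)+1)$ via a random vertex ordering, followed by convexity of $t\mapsto 1/(t+1)$ and the handshake identity -- is the standard derivation and is exactly what the cited source contains. Your observation that the greedy minimum-degree algorithm achieves the same bound deterministically is also apt, since the paper's concluding remarks rely on this independent set being computable in polynomial time. The inductive alternative you sketch is likewise sound: deleting a maximum-degree vertex decreases each neighbour's degree by one, and the resulting gain $\sum_{u\in N(v)} 1/\bigl(d(u)(d(u)+1)\bigr)\ge 1/(\Delta+1)$ compensates for the lost term.
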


In particular, the graph $G$ defined above contains an independent set of size $\Omega(n^{1/3})$.  Let $M'\subset M$ be the edges corresponding to this independent set of size $\Omega(n^{1/3})$.  Now we claim that the corresponding topological edges in $K$ are pairwise disjoint.  Indeed, let $e_1,e_2 \in M'$ such that $e_1 = v_iv_j$ and $e_2 = v_kv_l$.  Since the set $S_{i,j} \in \mathcal{S}_1$ does not stab $v_kv_l$, both vertices $v_k,v_l$ must either lie inside or outside of triangle $K[v_0,v_i,v_j]$.  See Figure~\ref{case1}.

\medskip

\noindent \emph{Case 1.}  Suppose that $v_k,v_l$ lie outside of triangle $K[v_0,v_i,v_j]$.  For sake of contradiction, suppose that the topological edges $K[v_i,v_j]$ and $K[v_k,v_l]$ cross.  By a simple parity argument, $K[v_k,v_l]$ must cross either $K[v_0,v_i]$ or $K[v_0,v_j]$, but not both.  However, this implies that the set $S'_{k,l}\in \mathcal{S}_2$ stabs $v_iv_j$ which is a contradiction.  Therefore, $K[v_i,v_j]$ and $K[v_k,v_l]$ must be disjoint.

\medskip

 \begin{figure}[h]
  \centering
  \subfigure{\label{case1}\includegraphics[width=0.3\textwidth]{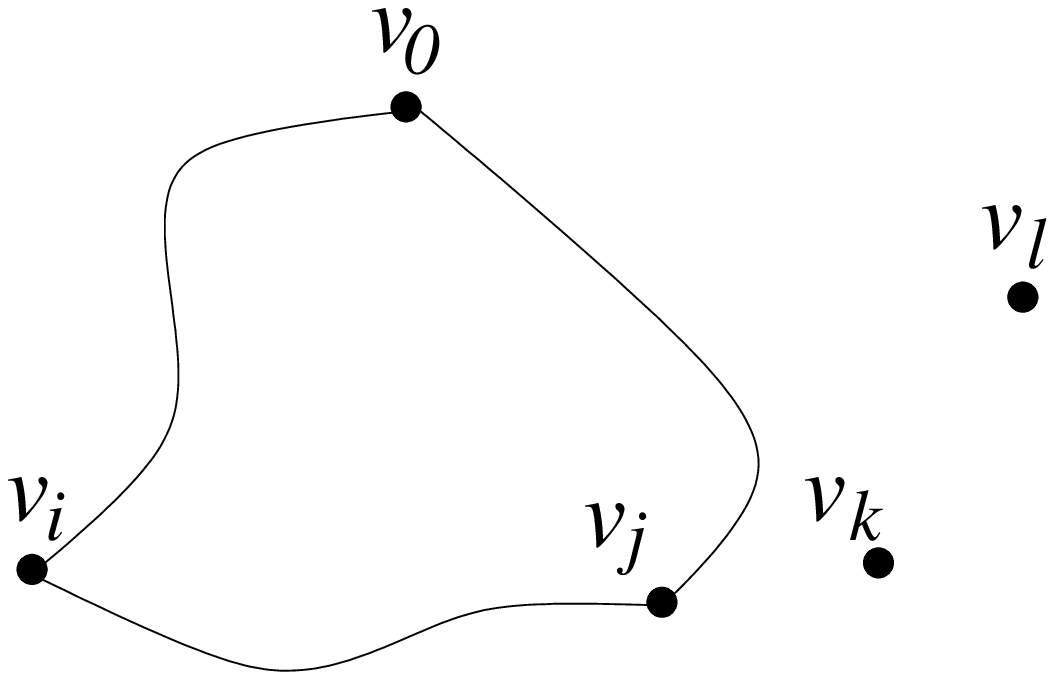}}\hspace{2cm}
\subfigure{\label{case1b}\includegraphics[width=0.22\textwidth]{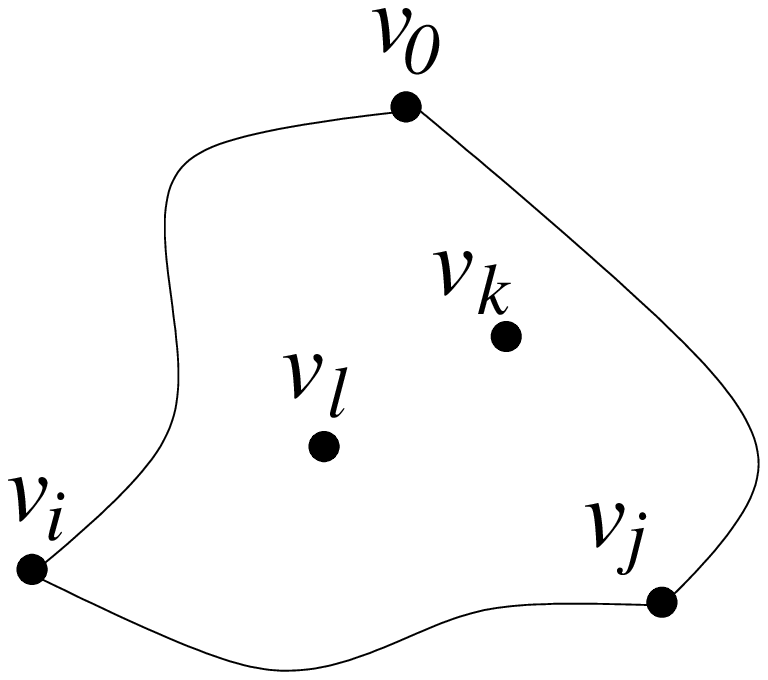}}
 \caption{Case 1 and 2.}
  \label{case1}
\end{figure}

\noindent \emph{Case 2.}  If $v_k,v_l$ lie inside of triangle $K[v_0,v_i,v_j]$, then by the same parity argument as above, $K[v_i,v_j]$ and $K[v_k,v_l]$ must be disjoint.

\section{Concluding Remarks}

\begin{enumerate}

\item \noindent \textbf{Remarks on algorithms.}  Since the matching in Lemma \ref{match} and the independent set in Theorem \ref{turan} can be found in polynomial time (see \cite{welzl} and \cite{spencer}), the proof above gives a polynomial time algorithm for finding $\Omega(n^{1/3})$ pairwise disjoint edges in a complete $n$-vertex simple topological graph.

    \item We proved that every complete $n$-vertex simple topological graph has at least $\Omega(n^{1/3})$ pairwise disjoint edges.  It would be interesting to see if a polynomial bound holds for dense simple topological graphs.

\item We proved that the dual shatter function $\pi^{\ast}_{\mathcal{F}}(m) = O(m^3)$.  Recently G\'eza T\'oth \cite{geza} gave a construction showing that this bound is tight.

\item To our knowledge, there are no constructions of a complete $n$-vertex simple topological graph with sublinear number of pairwise disjoint edges.  However, let us remark that Valtr \cite{brass} gave a construction of a complete $n$-vertex simple topological graph, such that every edge crosses at least $\Omega(n^{3/2})$ other edges.

\end{enumerate}

\end{document}